\newtheorem{theorem}{Theorem}[section]
\newtheorem{lemma}{Lemma}[section]
\newcommand{\sd}{\bigtriangleup}
\theoremstyle{definition}
\def\ge{\geqslant}
\def\~{\widetilde}
\begin{document}
 \title{  On the estimate of 
 distance traveled\\ by a particle   in
 a disk-like vortex patch  
 }
\author{Kyudong Choi   }
\address{
\begin{flushleft}
\vspace{1cm} 
kchoi@unist.ac.kr\\ \vspace{0.1cm}
Ulsan National Institute of Science and Technology  \\
Department of Mathematical Sciences \\
UNIST-gil 50, Ulsan, 44919,
Republic of Korea\\
\end{flushleft}
}

\renewcommand{\thefootnote}{\fnsymbol{footnote}}
\footnotetext{\emph{Key words:} 2D Euler, vortex patch,  large time behavior, travel distance, particle trajectory.
\quad\emph{2010 AMS Mathematics Subject Classification:} 76B47, 35Q35 }
\renewcommand{\thefootnote}{\arabic{footnote}}

\begin{abstract}
We consider the incompressible two-dimensional  Euler equation in the   plane 
 in the case when its initial vorticity is the characteristic function of a bounded open set.  We show  that the travel distance  grows linearly  for most of fluid particles initially placed on  the set when the area of the symmetric difference between the set and a disk is small enough.
\end{abstract}\vspace{1cm}
\maketitle
\large

{\Large \section{Introduction}}
\noindent We consider the  incompressible 2D Euler equation in vorticity form in the whole plane:
 \begin{equation}\begin{split}\label{main_eq}
\partial_t\theta
 + u \cdot \nabla \theta
&= 
0\quad\mbox{for }  x\in\mathbb{R}^2\mbox{ and  for }  t> 0,\\
 \theta|_{t=0}&=\theta_0 \quad\mbox{for }  x\in\mathbb{R}^2
\end{split}\end{equation} where the Biot-Savart law is given by
$u=(K*\theta)$ with
 \begin{equation} \label{K}
K(x):=\frac{1}{2\pi}\frac{x^\perp}{|x|^2}=\frac{1}{2\pi}\Big(-\frac{x_2}{|x|^2},\frac{x_1}{|x|^2}\Big).\end{equation}
When $\theta_0$ lies on $L^1\cap L^\infty$, the existence and uniqueness of a global-in-time weak solution is due to Yudovich \cite{yu}.\\ 

\noindent In this paper, we are interested in estimating the  distance traveled by a fluid particle.
More precisely, we consider the case when  the initial data $\theta_0$ is the characteristic function ${\mathbbm{1}}_{\Omega_0}$  of a   bounded open set $\Omega_0$ in $\mathbb{R}^2$.
Then the corresponding solution $\theta$ is given by Yudovich theory, and it has the form of $\theta(t)={\mathbbm{1}}_{\Omega_t}$ where $\Omega_t$ is defined by
$\Omega_t=\{
\phi_x(t) \in\mathbb{R}^2\,|\, x\in\Omega_0
\}$ and $\phi_x(\cdot)$ is the particle trajectory of  the particle whose initial position is at $x\in\mathbb{R}^2$, which can be obtained by solving the following system of the ordinary differential equations:
$$\frac{d}{dt}\phi_x(t)=u(t,\phi_x(t))\quad\mbox{for }t>0\quad\mbox{and } \quad \phi_x(0)=x.$$
This is well-defined since the velocity field $u$ allows a log-Lipschitz estimate (\textit{e.g.} see the modern texts \cite{mb}, \cite{MaPu}).
For $t>0$ and $x\in\mathbb{R}^2$, we say that $d_x(t)$ is   the   distance traveled by  the particle, whose initial position is at $x$, up to time $t$. More precisely, we define $d_x(t)$ by
$$d_x(t):= \int_0^t |u(s,\phi_x(s))|ds.$$ 

\noindent For instance, when $\Omega_0$ is the unit disk $D$ centered at the origin, we  easily compute  (see Subsection \ref{subsec_example})
$$d_x(t)=\frac{|x|}{2}t \quad\mbox{for  any } x \in D
\mbox{ and for any } t>0.$$
 For a  general bounded open set $\Omega_0$, 
we have, at least, a trivial linear upper bound  
 $$ {d_x(t)}\leq Ct\quad\mbox{for  any } x \mbox{ in the plane  and for any } t>0$$ 
where the above constant $C$ depends only on the Lebesgue measure $|\Omega_0|$ of the set $\Omega_0$. Indeed,   the conservation    of the total mass 
 $|{\Omega_t}| $ in time due to the incompressibility of the fluid gives a uniform bound for  $u$ (\textit{e.g.} see estimate   \eqref{est_bounded_velocity} with the substitution $f={\mathbbm{1}}_{\Omega_0}$). \\

\noindent The main result of this paper says that,      for most of particles  placed on  the initial vortex patch in the beginning, the travel distance actually grows linearly in time when the initial  patch is disk-like in the sense that the measure of the initial symmetric difference $$\Omega_0\bigtriangleup B_r:=(\Omega_0\backslash B_r)\cup(B_r\backslash\Omega_0)$$ is small enough. Here we denote $B_r:=\{x\in\mathbb{R}^2\,|\,|x|<r\}$ for  $r>0$.  
Without loss of generality, we will compare the initial patch $\Omega_0$ only with the unit disk $$D:=B_1$$  thanks to  the scaling  of the Euler equation.\\

  \begin{theorem}\label{thm_infinite}
  For any $R>0$, 
  there exist  constants $\delta_0>0$, $C>0$ and $c>0$  such that if
$\Omega_0\subset B_{R}$  and if $$|D\sd\Omega_0  |<\delta_0,$$ then the 
 solution of \eqref{main_eq} for the initial data $\theta_0={\mathbbm{1}}_{\Omega_0}$ satisfies the following two properties:\\ \indent $(I)$ For any $\delta>0$ satisfying $|D\sd\Omega_0 |\leq \delta\leq \delta_0$, there exists a set 
  $H_\delta\subset \Omega_0$ such that $$|H_\delta|\geq |\Omega_0|-C\delta^{1/4}$$ and 
\begin{equation}\label{est_main_infinite}
\limsup_{t\to\infty} \frac{d_x(t)}{t}\geq c{\delta^{1/2}}
\end{equation}
 for every $x\in H_\delta$.  \\
\indent  $(II)$ For any $\delta>0$ satisfying $|D\sd\Omega_0 |\leq\delta\leq \delta_0$ 
 and
for each  $T>0$, there exists
 a set $H_{\delta, T}\subset \Omega_0$ such that
$$|H_{\delta, T}|\geq |\Omega_0|-C\delta^{1/4}
$$ and 
\begin{equation}\label{thm_finite_estimate}\frac{d_x(T)}{T}\geq   c\delta^{1/4}  \end{equation} 
 for every $x\in H_{\delta, T}$.
 \end{theorem}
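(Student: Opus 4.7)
The plan is to combine a long-time stability estimate for the disk patch with an elementary Markov argument that bounds the amount of time each particle can spend near the origin. First, I would invoke a quantitative Wan--Pulvirenti type stability estimate to obtain a uniform-in-time bound
\[
\sup_{t\ge 0}|\Omega_t\sd D|\le C\delta^{1/2};
\]
the proof rests on conservation of the kinetic energy $\tfrac12\int\psi\theta\,dx$ and of the angular impulse $\int|x|^2\theta\,dx$ along the flow, together with the fact that $\mathbbm{1}_D$ is a strict maximizer of the energy among rearrangements with a fixed angular impulse (the initial shift of the center of mass is of order $\delta$ and can be absorbed into $C$). Since $u_D(y)=-\tfrac12 y^\perp$ inside $D$, the Biot--Savart law combined with the standard $L^1\cap L^\infty$ interpolation for the kernel $K$ then gives
\[
\|u(t,\cdot)-u_D\|_{L^\infty(\mathbb{R}^2)}\le C\|\theta(t)-\mathbbm{1}_D\|_{L^1}^{1/2}\|\theta(t)-\mathbbm{1}_D\|_{L^\infty}^{1/2}\le C\delta^{1/4}
\]
uniformly in $t$. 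In particular, for any $y$ in the a priori bounded support of $\Omega_t$ with $|y|\ge\rho:=C_1\delta^{1/4}$, and $C_1$ chosen sufficiently large, one has $|u(t,y)|\ge c\,\delta^{1/4}$.

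Next I would identify the good sets via a Markov inequality on the time the trajectory spends inside the small ball $A_\rho:=B_\rho$. Incompressibility of the Yudovich flow $\phi_x(\cdot)$ and Fubini's theorem give
\[
\int_{\Omega_0}\int_0^T\mathbbm{1}_{A_\rho}(\phi_x(s))\,ds\,dx=\int_0^T|\Omega_s\cap A_\rho|\,ds\le \pi\rho^2\,T,
\]
so setting $E_T:=\{x\in\Omega_0\colon\int_0^T\mathbbm{1}_{A_\rho}(\phi_x(s))\,ds>T/2\}$, Markov gives $|E_T|\le 2\pi\rho^2\le C\delta^{1/2}$. Taking $H_{\delta,T}:=\Omega_0\setminus E_T$ in part $(II)$, every $x\in H_{\delta,T}$ has a trajectory lying outside $A_\rho$ on a set of times of measure at least $T/2$, where the pointwise lower bound above applies, and therefore $d_x(T)\ge (T/2)\cdot c\,\delta^{1/4}$, which is \eqref{thm_finite_estimate}. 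For part $(I)$ I would take $H_\delta$ to be the set of $x$ satisfying $\liminf_{T\to\infty}T^{-1}\int_0^T\mathbbm{1}_{A_\rho}(\phi_x(s))\,ds<1/2$; Fatou's lemma applied to the family $E_T$ yields $|\Omega_0\setminus H_\delta|\le\liminf_T|E_T|\le C\delta^{1/2}$, and for every $x\in H_\delta$ one extracts a sequence $T_k\to\infty$ along which the previous integration applies to give $d_x(T_k)/T_k\ge c\delta^{1/4}\ge c\delta^{1/2}$, establishing \eqref{est_main_infinite}.

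The principal obstacle is the uniform-in-time stability estimate of the first step: producing a quantitative $\delta^{1/2}$ exponent for $|\Omega_t\sd D|$ is classical in spirit but technically delicate, and is presumably developed in a preceding section of the paper. Once this is in hand, the rest of the argument is a fairly direct combination of the resulting $L^\infty$ velocity bound with the measure preservation of the flow. One observes a loss from $\delta^{1/2}$ in the stability estimate to $\delta^{1/4}$ in the velocity bound through the Biot--Savart interpolation; this is precisely what pins the distance rate at $\delta^{1/4}$ in \eqref{thm_finite_estimate}, and improving it would require exploiting genuine cancellation within $u-u_D$, which does not appear to be necessary for the statements being proved.
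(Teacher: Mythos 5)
Your proposal follows essentially the same architecture as the paper: a uniform-in-time $L^1$ stability bound $|\Omega_t\sd D|\le C\delta^{1/2}$, the interpolation $\|K*f\|_{L^\infty}\le C\|f\|_{L^1}^{1/2}\|f\|_{L^\infty}^{1/2}$ to get $\|u(t)-u_D\|_{L^\infty}\le C\delta^{1/4}$, and then incompressibility plus Fubini/Chebyshev to show that most particles spend most of their time in a region where the speed has a lower bound. One remark on the step you call the principal obstacle: the needed stability estimate is exactly the cited Sideris--Vega theorem, $|\Omega_t\sd D|^2\le 4\pi\,\sup_{\Omega_0\sd D}\bigl||x|^2-1\bigr|\cdot|\Omega_0\sd D|$, proved by conservation of mass, momentum and moment of inertia; no energy/rearrangement argument needs to be developed, so that input is available off the shelf.

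There is, however, a genuine gap in your lower bound on the speed. You claim $|u(t,y)|\ge c\,\delta^{1/4}$ for all $|y|\ge\rho$ ``in the a priori bounded support of $\Omega_t$''. But $u_D$ decays like $1/(2|y|)$ outside $D$, and there is no uniform-in-time bound on the support of $\Omega_t$: the trivial bound grows linearly in $t$, and even the known confinement results give algebraically growing bounds. Hence for large $T$ (and a fortiori for the $\limsup$ in part $(I)$) the pointwise bound fails at far-away points, and your constant $c$ would degrade with $T$, whereas the theorem requires $c$ independent of $T$. Since your Markov argument only excises the time spent in $B_\rho$, a ``good'' particle is still allowed to spend essentially all of $[0,T]$ outside $B_1$, where you control nothing. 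The repair is precisely what the paper does: take the bad region to be $B_\rho\cup B_1^C$ and run the same Fubini computation with $|\Omega_s\cap(B_\rho\cup B_1^C)|\le \pi\rho^2+|\Omega_s\sd D|\le \pi\rho^2+C\delta^{1/2}\lesssim\rho^2$, i.e.\ feed the stability estimate into the Markov step as well, not only into the velocity perturbation bound. With that modification your Chebyshev argument and the distance estimates go through; in fact, with your threshold $1/2$ one even obtains the rate $\delta^{1/4}$ in part $(I)$, slightly stronger than the stated $\delta^{1/2}$ (which of course it implies, since $\delta\le 1$).
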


  



 
\noindent 
In the proof, we  use  the $L^1$-stability result  in  \cite{sv}.
Indeed, 
when the initial patch is disk-like,   we can show that
 for each fixed time moment, most of particles, which are initially placed on the initial set $\Omega_0$, should be detected 
in the  annulus region $\{\epsilon\leq |x|<1\}$ 
 thanks to the incompressibility of the fluid. Here $\epsilon>0$ will be taken small depending on $\delta>0$. This gives Lemma \ref{stability_consequence}. 
Then we prove that 
most of particles spend most of their life time in the  annulus     (\textit{e.g.} see \eqref{most_time}). 
Since 
the speed induced by the exact disk patch is non-trivial within the region, we get the conclusion by  using the stability result again.  \\

\noindent For the vortex patch problem, there are many other interesting results including persistence of boundary regularity   \cite{Che}, \cite{BC} (or see the textbook \cite{Che_book} and references therein,  also  see \cite{KRYZ} for a blow-up result in a modified SQG patch equation), existence   of rotating patches \cite{DeZa}, \cite{denisov_v_state}, \cite{Vstate}, \cite{HmMaVe}, which are so-called ``V-states'', 
 and stability of circular patches   \cite{wp}, \cite{Dri}, \cite{sv} (also see \cite{bd} for rectangular patches in a 2D infinite cylinder). 
Confinement of patch evolution, or support of positive vorticity evolution, is  also interesting (\textit{e.g. }see \cite{marchioro}, \cite{isg}, \cite{Serfati}, \cite{CD}, \cite{iln}).\\ 

\noindent  Before proving our theorem, we present the simplest example for travel distance.
\subsection{When $\Omega_0=D$}\label{subsec_example} \ \\
If we consider the initial data $\theta_0={\mathbbm{1}}_D$, then the radial symmetry of the data implies that corresponding solution is stationary. The velocity induced by the stationary vorticity $\theta_t={\mathbbm{1}}_D$ is 
\begin{equation}\label{no_rad}
u(x)=\begin{cases} &\frac{ x^{\perp}}{2} \quad\mbox{if } x\in D,\\
&\frac{x^{\perp}}{2|x|^2} \quad\mbox{otherwise }  
\end{cases} \end{equation}   (e.g. see \cite{mb}). 
If we decompose into its radial part and tangential part:
$u =u_{rad}\frac{x}{|x|}+u_{tan}\frac{x^\perp}{|x|},$ then  
we get $u_{rad}=0$
 and
$$u_{tan}(x)=\begin{cases} &\frac{|x|}{2} \quad\mbox{if } x\in D,\\
&\frac{1}{2|x|} \quad\mbox{otherwise. }  
\end{cases} $$
It says that each particle in the disk just rotates in a constant angular velocity. Thus we simply have, for any $x\in D$ and for any $t> 0$, $$\frac{d_x(t)}{t}=\frac{|x|}{2}$$
and $ d_x(\infty) =\infty $ unless $x=0$.
\section{Proof}
\subsection{$L^1$-stability of a disk patch} \ \\
The paper \cite{sv} showed the following $L^1$-stability of a circular vortex patch:
\begin{lemma}[Theorem 3 in \cite{sv}]
For any bounded open set $\Omega_0\subset \mathbb{R}^2$ and for any $r>0$, we have 
\begin{equation}\label{sv_result_original}
\|{\mathbbm{1}}_{\Omega_t}-{\mathbbm{1}}_{B_r}\|_{L^1}^2\leq4\pi\cdot \sup_{x\in\Omega_0\bigtriangleup B_r}||x|^2-r^2|\cdot
\|{\mathbbm{1}}_{\Omega_0}-{\mathbbm{1}}_{B_r}\|_{L^1} \quad\mbox{for any }t> 0.
\end{equation}
\end{lemma}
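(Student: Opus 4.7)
My plan rests on two conserved quantities for the 2D Euler flow with compactly supported vorticity: the total mass $\int\theta\,dx$ (immediate from incompressibility) and the second moment $\int |x|^2\theta\,dx$. For the latter, after differentiating in time and integrating by parts, the flux reduces to
\begin{equation*}
\iint \frac{x_1 y_2 - x_2 y_1}{|x-y|^2}\,\theta(x)\theta(y)\, dx\, dy,
\end{equation*}
which vanishes by antisymmetry of the kernel under $x\leftrightarrow y$. Applied to $\theta_t = \mathbbm{1}_{\Omega_t}$, the two conservation laws combine into the identity
\begin{equation*}
\int (|x|^2 - r^2)\bigl(\mathbbm{1}_{\Omega_t} - \mathbbm{1}_{B_r}\bigr)\, dx \;=\; \int (|x|^2 - r^2)\bigl(\mathbbm{1}_{\Omega_0} - \mathbbm{1}_{B_r}\bigr)\, dx.
\end{equation*}

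The next input is a sign observation: for any measurable set $E$, the factors $|x|^2 - r^2$ and $\mathbbm{1}_E - \mathbbm{1}_{B_r}$ share a sign on $E\bigtriangleup B_r$ and vanish together off of it, so both sides of the identity above are really absolute-value integrals. The right side is thus at most $\sup_{x\in\Omega_0\bigtriangleup B_r}||x|^2 - r^2|\cdot \|\mathbbm{1}_{\Omega_0} - \mathbbm{1}_{B_r}\|_{L^1}$, which already matches the right-hand side of \eqref{sv_result_original}. It remains to bound the left side, $\int_{\Omega_t\bigtriangleup B_r}||x|^2 - r^2|\, dx$, from below by $\|\mathbbm{1}_{\Omega_t} - \mathbbm{1}_{B_r}\|_{L^1}^2/(4\pi)$.

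For this final step I would apply a bathtub-principle argument. Setting $a:=|\Omega_t\setminus B_r|$ and $b:=|B_r\setminus\Omega_t|$, the weight $||x|^2 - r^2|$ is radially monotone away from the circle $\{|x| = r\}$, so each of the two pieces is minimized in integral by an annulus packed against that circle; a direct radial integration shows these minima are exactly $a^2/(2\pi)$ and $b^2/(2\pi)$. Combining with $a^2 + b^2 \geq (a+b)^2/2$ produces the square on the left and furnishes the factor $4\pi$. I expect this rearrangement step to be the main obstacle, since it is the only place where the disk geometry (making $B_r$ a sublevel set of $|x|^2$) is really used; everything else is essentially bookkeeping once the conservation of the second moment, a peculiarly 2D phenomenon, is in hand.
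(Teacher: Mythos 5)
Your proposal is correct and is essentially the Sideris--Vega argument that the paper itself does not reproduce but simply cites as Theorem~3 of \cite{sv}: conservation of mass and of the moment of inertia give the identity $\int(|x|^2-r^2)(\mathbbm{1}_{\Omega_t}-\mathbbm{1}_{B_r})\,dx=\int(|x|^2-r^2)(\mathbbm{1}_{\Omega_0}-\mathbbm{1}_{B_r})\,dx$, the pointwise sign agreement turns both sides into integrals of $||x|^2-r^2|$ over the symmetric differences, and your bathtub computation $a^2/(2\pi)+b^2/(2\pi)\ge (a+b)^2/(4\pi)$ supplies the factor $4\pi$. The only minor deviation is that you do not need the conservation of momentum mentioned in the paper's remark, since the comparison disk is centered at the origin; your argument is self-contained and matches the cited proof in substance.
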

\noindent In \cite{sv}, the authors used conservation of mass, momentum and moment of inertia to prove the above result.\\

\noindent We note   $\|{\mathbbm{1}}_{A}-{\mathbbm{1}}_B\|_{L^1}=\int_{A\bigtriangleup  B}1 dx=
|A\bigtriangleup  B|$ for any bounded open sets $A$ and $B$.   In this paper, we will use the result \eqref{sv_result_original} of the above lemma
 with the choice $r=1$:
\begin{equation}\label{sv_result}
|\Omega_t \bigtriangleup D|^2 \leq4\pi\cdot \sup_{x\in\Omega_0\bigtriangleup D}||x|^2-1|\cdot |\Omega_0 \bigtriangleup D|  \quad\mbox{for any }t> 0
\end{equation} in the following way:\\
\indent Let 
$R>0$.
Assume $\Omega_0\subset B_R$ with
$\alpha_0:=|\Omega_0 \bigtriangleup D|>0. 
$
Consider the initial data 
$\theta_0={\mathbbm{1}}_{\Omega_0}$ with its solution   $\theta(t)={\mathbbm{1}}_{\Omega_t}$.
Since $\sup_{\Omega_0\bigtriangleup D}||x|^2-1|\leq (R^2+1)$,
the stability result \eqref{sv_result} implies 
\begin{equation}\label{est_sta}
|\Omega_t \bigtriangleup D|\leq 2\sqrt{\pi(R^2+1)\alpha_0}.
\end{equation}

\noindent On the other hand, we observe that there exists a constant $C_0>0$ such that
 \begin{equation}\label{est_bounded_velocity}
 \|K*f\|_{L^\infty}\leq C_0\|f\|^{1/2}_{L^1}\|f\|^{1/2}_{L^\infty}
 \end{equation} for any $f\in (L^1\cap L^\infty)(\mathbb{R}^2).$ This can be proved by using the obvious estimate $|K(x)|\leq \frac{C}{|x|}$ of the Biot-Savart kernel \eqref{K} (or see Lemma 2.1. in \cite{isg}).
 In our setting,  we have 
$$\|K*({\mathbbm{1}}_{\Omega_t}-{\mathbbm{1}}_{D})\|_{L^\infty}\leq C_0\|{\mathbbm{1}}_{\Omega_t}-{\mathbbm{1}}_{D}\|^{1/2}_{L^1}\|{\mathbbm{1}}_{\Omega_t}-{\mathbbm{1}}_{D}\|^{1/2}_{L^\infty}\leq C_1\alpha_0^{1/4}\quad \mbox{where }$$  \begin{equation}\label{def_c1}
C_1=C_1(R):=C_0\sqrt{2\sqrt{\pi(R^2+1)}}>0.
\end{equation} 

\noindent Since the vector field $(K*{\mathbbm{1}}_{D})$ has no radial component by \eqref{no_rad} and the velocity $u(t)$ from $\theta(t)={\mathbbm{1}}_{\Omega_t} $ can be decomposed:  $ u(t)=K*[\theta(t)]=K*{\mathbbm{1}}_{\Omega_t} 
=K*{\mathbbm{1}}_{D}+K*({\mathbbm{1}}_{\Omega_t}-{\mathbbm{1}}_{D}),$  we obtain \begin{equation*}
|u_{rad}(t,x)|\leq C_1\alpha_0^{1/4}
\end{equation*} for any $x\in\mathbb{R}^2$ and for any $t>0$. 
 For the tangential component, 
we have, 
 for    $x\in D$ 
and for   $t> 0$,   
\begin{equation}\label{est_speed_lower}
|u_{tan}(t,x)|\geq \frac{|x|}{2}-C_1\alpha_0^{1/4}.
\end{equation} 
In this setting, we can prove the following lemma:
\begin{lemma}\label{stability_consequence}
Let $0<\epsilon<1$. Suppose 
$\Omega_0\subset B_R$ and $2\sqrt{\pi(R^2+1)\cdot |\Omega_0 \bigtriangleup D|}\leq \epsilon^2\pi.$  Then for each finite $T>0$, we have
\begin{equation*}
\int_{\Omega_0}\Big(\frac{1}{T}\int_0^T{\mathbbm{1}}_{B_\epsilon\cup B_1^C}(\phi_x(t))dt \Big)dx
\leq 2\epsilon^2\pi.
\end{equation*}
\end{lemma}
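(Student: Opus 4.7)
The plan is to swap the order of integration via Fubini, exploit the measure-preserving property of the Euler flow, and then reduce the spatial bound to a use of the stability estimate \eqref{est_sta} already derived above.

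First I would rewrite the quantity of interest using Fubini's theorem as
\begin{equation*}
\int_{\Omega_0}\int_0^T {\mathbbm{1}}_{B_\epsilon\cup B_1^C}(\phi_x(t))\,dt\,dx
=\int_0^T\int_{\Omega_0}{\mathbbm{1}}_{B_\epsilon\cup B_1^C}(\phi_x(t))\,dx\,dt.
\end{equation*}
For a fixed time $t\in[0,T]$, the inner integral is the Lebesgue measure of the set $\{x\in\Omega_0:\phi_x(t)\in B_\epsilon\cup B_1^C\}$. Since the flow map $x\mapsto \phi_x(t)$ is area-preserving (this is the incompressibility $\nabla\cdot u=0$, which also defines $\Omega_t=\phi_t(\Omega_0)$), this measure equals $|\Omega_t\cap(B_\epsilon\cup B_1^C)|$.

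Next I would split the intersection into its two natural pieces. For the part inside $B_\epsilon$ we use the trivial bound $|\Omega_t\cap B_\epsilon|\le|B_\epsilon|=\pi\epsilon^2$. For the part outside the unit disk we observe that $\Omega_t\cap B_1^C = \Omega_t\setminus D\subset\Omega_t\sd D$, hence $|\Omega_t\cap B_1^C|\le|\Omega_t\sd D|$. Combining,
\begin{equation*}
|\Omega_t\cap(B_\epsilon\cup B_1^C)|\le\pi\epsilon^2+|\Omega_t\sd D|.
\end{equation*}

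Now I invoke the stability consequence \eqref{est_sta}, namely $|\Omega_t\sd D|\le 2\sqrt{\pi(R^2+1)\,|\Omega_0\sd D|}$ for all $t>0$, together with the standing hypothesis $2\sqrt{\pi(R^2+1)\,|\Omega_0\sd D|}\le\epsilon^2\pi$. This gives $|\Omega_t\sd D|\le\pi\epsilon^2$, so $|\Omega_t\cap(B_\epsilon\cup B_1^C)|\le 2\pi\epsilon^2$ uniformly in $t$. Integrating in $t\in[0,T]$ and dividing by $T$ yields the claim. There is really no obstacle: the only nontrivial input is the $L^1$-stability estimate \eqref{sv_result}, which has already been reduced to the form \eqref{est_sta} in the preceding discussion, and the rest is bookkeeping.
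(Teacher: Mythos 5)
Your proposal is correct and follows essentially the same route as the paper: Fubini, the area-preserving property of the flow map, the split of $|\Omega_t\cap(B_\epsilon\cup B_1^C)|$ into $|B_\epsilon|$ plus $|\Omega_t\sd D|$, and the stability bound \eqref{est_sta} combined with the hypothesis. No issues to report.
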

\begin{proof}
We define  
\begin{equation*}
F(t,x)={\mathbbm{1}}_{B_\epsilon\cup B_1^C}(\phi_x(t))=\begin{cases} 1\quad\mbox{ if either }|\phi_x(t)|< \epsilon  \mbox{ or }  |\phi_x(t)|\geq 1 ,\\
0 \quad \mbox{otherwise}.
\end{cases}\end{equation*}
For each $t>0$, since
the velocity $u$ is divergence-free, the flow map $X(t,x):=\phi_x(t)$ is area preserving so that  we have 
\begin{equation*}\begin{split}\int_{\Omega_0}F(t,x)dx
&=\int_{\Omega_0}{\mathbbm{1}}_{B_\epsilon\cup B_1^C}(\phi_x(t))dx=\int_{\Omega_t\cap B_\epsilon}1 dx+\int_{\Omega_t\cap B_1^C}1 dx\\
&\leq 
\int_{ B_\epsilon}1 dx+\int_{\Omega_t\bigtriangleup D}1 dx\leq |B_\epsilon| +|\Omega_t\bigtriangleup D|
\\
&\leq \epsilon^2\pi +2\sqrt{\pi(R^2+1)\alpha_0}\leq 2\epsilon^2\pi
\end{split}\end{equation*} by \eqref{est_sta} and by the assumption in this lemma.  
Let $T>0$. Then, by Fubini, we get 
$
\int_{\Omega_0} { \int_0^TF(t,x)dt} dx=  \int_0^T\int_{\Omega_0}F(t,x)dxdt
\leq 2\epsilon^2\pi T. 
$

\end{proof}

\subsection{Proof of $(I)$ in Theorem \ref{thm_infinite}: Travel distance in infinite time}  
  
   \begin{proof}[Proof of $(I)$ in Theorem \ref{thm_infinite}]

Let $R>0$. 
   First, we take 
   $\delta_0:=\Big(\min\{\frac{1}{8C_1},\frac{1}{2}\sqrt{\frac{\pi}{2\sqrt{\pi(R^2+1)}}}\}\Big)^4>0$
   where $C_1=C_1(R)>0$ is defined in \eqref{def_c1}.
Assume $\Omega_0\subset B_{R}$ and $\alpha_0:=|\Omega_0 \bigtriangleup D|<\delta_0$ and take any $\delta>0$ satisfying $\alpha_0\leq\delta\leq\delta_0$. Define  $\epsilon=\epsilon(\delta):=
   C_2\cdot \delta^{1/4}$
   where $C_2:=    \max\{4C_1,\sqrt{\frac{2\sqrt{\pi(R^2+1) }}{\pi}}\}>0$. Clearly, we have $0<\epsilon\leq 1/2$,  
   \begin{equation}\label{cond_1}
    C_1\alpha_0^{1/4}
   \leq C_1\delta^{1/4}
    \leq \frac{\epsilon}{4}
   \end{equation} and  
      \begin{equation}\label{cond_2}
  2\sqrt{\pi(R^2+1)\alpha_0}  \leq 2\sqrt{\pi(R^2+1)\delta}  \leq \epsilon^2\pi.
   \end{equation}
   
 \noindent  For each $n\in\mathbb{N}$ and for $x\in\Omega_0$, we define $\psi_n(x):=\frac{1}{n}\int_0^n{\mathbbm{1}}_{D\backslash{B_\epsilon}}(\phi_x(t))dt.$ We observe $0\leq \psi_n\leq 1$. By Lemma \ref{stability_consequence} due to \eqref{cond_2}, we   have
\begin{equation*} 
 \int_{\Omega_0}\psi_n dx=
\int_{\Omega_0}\Big(\frac{1}{n}\int_0^n{\mathbbm{1}}_{D\backslash{B_\epsilon}}(\phi_x(t))dt \Big)dx
=\int_{\Omega_0}\frac{1}{n}\int_0^n\Big(1-{\mathbbm{1}}_{ {B_\epsilon}\cup B_1^C}(\phi_x(t))\Big)dt dx\geq |\Omega_0|-2\epsilon^2\pi.
 \end{equation*}
 We put $f_n(x)=\sup_{m\geq n}\psi_m(x)$ for each $n$. Then  we notice that $0\leq f_n\leq 1$ and  $\{f_n\}_{n=1}^\infty$ decays pointwise to some function   $ f\geq 0$.   By Dominated Convergence Theorem (\textit{e.g.} see \cite{folland}), we have 
$\int_{\Omega_0} fdx=\lim_{n\to\infty} \int_{\Omega_0} f_ndx$. From
  $\int_{\Omega_0} f_ndx\ge \int_{\Omega_0} \psi_ndx\geq|\Omega_0|-2\epsilon^2\pi$,  we get  $$\int_{\Omega_0} fdx\geq|\Omega_0|-2\epsilon^2\pi.$$

\noindent By putting $H:=\{x\in\Omega_0\,|\,f(x)>\epsilon \}$,  
 we have 
$$|\Omega_0|-2\epsilon^2\pi \leq \int_{\Omega_0} f dx=\int_{\Omega_0\cap \{f\leq \epsilon \}}f dx+\int_{\Omega_0\cap \{f> \epsilon \}}f dx
\leq \epsilon |\Omega_0|+|H|.
$$ Thus  we get $$|H|\geq  |\Omega_0|-\epsilon( |\Omega_0|+2 \pi\epsilon) \geq  |\Omega_0|-\epsilon( |D|+|D\sd \Omega_0|+2 \pi)\geq  |\Omega_0|-\epsilon( \pi+\delta_0+2 \pi)\geq  |\Omega_0|-C\delta^{1/4}$$ for some $C>0$  by $\epsilon\sim \delta^{1/4}$.  
We observe 
 \[
 f(x)=
 \limsup_{n\to\infty}\frac{1}{n}\int_0^n{\mathbbm{1}}_{D\backslash B_\epsilon}(\phi_x(t))dt.
 \]  
 Thus for each $x\in H$, there is an increasing sequence $\{n_k(x)\}_{k=1}^\infty$ in $\mathbb{N}$   such that
 $\lim_{k\to\infty}n_k(x)=\infty$ and
 \[
 \frac{1}{n_k(x)}|\{t\in[0,n_k(x)]\,|\, \epsilon\leq |\phi_x(t)|<1   \}|=\frac{1}{n_k(x)}\int_0^{n_k(x)}{\mathbbm{1}}_{D\backslash B_\epsilon}(\phi_x(t))dt\geq {\epsilon}.
 \]

\noindent   On the other hand,
  whenever $(t,x)$ satisfies
$\epsilon\leq |\phi_x(t)|<1 $,
we have 
\begin{equation}\label{est_speed_lower_result}
|u (t,\phi_x(t))|\geq |u_{tan}(t,\phi_x(t))|\geq \frac{|\phi_x(t)|}{2}-C_1\alpha_0^{1/4}\geq \frac{\epsilon}{2}-C_1\delta^{1/4}\geq \frac{\epsilon}{4} \end{equation} 
by \eqref{est_speed_lower} and \eqref{cond_1}. 
 Thus, for any $x\in H$ and for any $k\in\mathbb{N}$, we have $$d_x(n_k(x))
 =\int_0^{n_k(x)}|u(s,\phi_x(s))|ds\geq (n_k(x)\cdot{\epsilon})\cdot\frac{\epsilon}{4}$$ which implies
  $
 \limsup_{t\to\infty} \frac{d_x(t)}{t}\geq \frac{\epsilon^2}{4}
$
  for each $x\in H$. Since $\epsilon\sim \delta^{1/4}$, we get \eqref{est_main_infinite}.
 
 \end{proof}

\subsection{Proof of $(II)$ in Theorem \ref{thm_infinite}: Travel distance in finite time}  
  
  \begin{proof}[Proof of $(II)$ in Theorem \ref{thm_infinite}]
  We take the same $\delta_0, \alpha_0,\delta,\epsilon$ as in the proof of  Theorem \ref{thm_infinite} so that we have 
  $0<\epsilon\leq 1/2$,  
\eqref{cond_1} and \eqref{cond_2}.

\noindent   Let $T>0$. By Lemma \ref{stability_consequence} due to \eqref{cond_2}, we have 
$$
\int_{\Omega_0}\underbrace{\Big(\frac{1}{T}\int_0^T{\mathbbm{1}}_{B_\epsilon\cup B_1^C}(\phi_x(t))dt \Big)}_{=:G_T(x)}dx
\leq 2\epsilon^2\pi.
$$
 
\noindent By Chebyshev (\textit{e.g.} see \cite{folland}), we get 
$|\{x\in\Omega_0\,|\, G_T(x)\geq \epsilon\}|\leq {2\epsilon\pi  }.$
 
\noindent We put $H_T:=\{x\in\Omega_0\,|\, G_T(x)<\epsilon  \}$. Then we observe 
$|H_T|\geq |\Omega_0|- {2\epsilon \pi}\geq |\Omega_0|- C\delta^{1/4}
$ for some $C>0$ by $\epsilon\sim \delta^{1/4}$.
We also observe that for any $x\in H_T$,
$$|\{t\in[0,T]\,|\, |\phi_x(t)|<\epsilon \mbox{ or } |\phi_x(t)|\geq 1 \}|=\int_0^T
{\mathbbm{1}}_{B_\epsilon\cap B_1^C}(\phi_x(t))dt=T\cdot G_T(x)<\epsilon T.$$
Thus for each $x\in H_T$, we get 
\begin{equation}\label{most_time}
|\{t\in[0,T]\,|\, \epsilon\leq |\phi_x(t)|<1   \}|\geq (1-\epsilon) T.
\end{equation}


\noindent By \eqref{est_speed_lower} and \eqref{cond_1}, 
if
$\epsilon\leq |\phi_x(t)|<1 $,
then   we get 
$|u (t,\phi_x(t))|
\geq \frac{\epsilon}{4}$ 
as in \eqref{est_speed_lower_result}.
Thus, for any $x\in H_T$,  we get 
$
d_x(T)\geq (1-\epsilon) T\cdot \frac{\epsilon}{4}\geq T\cdot \frac{\epsilon}{8} 
$
because of   $\epsilon\leq 1/2$. Since $\epsilon\sim \delta^{1/4}$, we obtain \eqref{thm_finite_estimate}.


\end{proof}

{\Large \section*{acknowledgement}}

\noindent KC was supported by   the National Research Foundation of Korea (NRF-2018R1D1A1B07043065) and by the POSCO Science Fellowship of POSCO TJ Park Foundation. We thank Prof. S. Denisov  for many helpful discussions.\\

\end{document}